\documentclass[11pt]{article}
\usepackage{hyperref}
\usepackage[T1]{fontenc}
\usepackage[utf8]{inputenc}
\usepackage{amsmath,amssymb,amsthm,mathtools}
\usepackage[a4paper,margin=20mm]{geometry}
\usepackage{enumerate}
\newtheorem{theorem}{Theorem}[section]

\usepackage{url}

\title{Lack of exponential stability for thermoelastic Timoshenko and Bresse systems with full homogeneous Dirichlet boundary conditions}
\author{Pedro Roberto de Lima\\
{\small \textit{Federal Rural University of Rio de Janeiro, Seropédica, Rio de Janeiro, Brazil}}}
\date{}

\begin{document}
\maketitle

\begin{abstract}
In this paper we prove that the classical thermoelastic Timoshenko and Bresse systems with thermal coupling acting only on the bending moment are not exponentially stable under full homogeneous Dirichlet boundary conditions if the equal wave speed condition is not satisfied. Combined with their well-known converses, our results provide complete characterizations of exponential stability in terms of the coefficients of each system. To the best of our knowledge, these characterizations were previously known only under mixed boundary conditions. The proof is based on the introduction of a boundary correction term in the standard trigonometric ansatz so that the full homogeneous Dirichlet conditions are satisfied.
\end{abstract}

\section{Introduction}

The classical linear Timoshenko system with thermal effects acting only on the bending moment is given by
\begin{equation}\label{sistema}\left\{\begin{aligned}
&\rho_1\varphi_{tt}-k(\varphi_x+\psi)_x=0,\\
&\rho_2\psi_{tt}-b\psi_{xx}+k(\varphi_x+\psi)+\gamma\theta_x=0,\\
&\rho_3\theta_t-\kappa\theta_{xx}+\gamma\psi_{xt}=0,
\end{aligned}\right.\end{equation}
where $\varphi$, $\psi$ and $\theta$ are functions of time $t\geq 0$ and position $x\in[0,\ell]$ describing, respectively, the vertical displacement, the angle of rotation of the cross section and the temperature deviation of a planar and initially straight beam of length $\ell$, and all coefficients are positive constants. This system was first studied in the influential work \cite{Rivera02}, where the authors proved (among other things) the following results:
\begin{itemize}
    \item For the mixed boundary conditions
    \begin{equation}\label{bcm}
\varphi=\psi_x=\theta=0\quad\text{on}\quad[0,\infty)\times \{0,\ell\},
\end{equation}
    system \eqref{sistema} is exponentially stable if and only if the equal wave speed condition
    \begin{equation}\label{EWSC}
    \frac{\rho_1}{k}=\frac{\rho_2}{b}
    \end{equation}
    is satisfied.
     \item For the full homogeneous Dirichlet boundary conditions
    \begin{equation}\label{bcd}
\varphi=\psi=\theta=0\quad\text{on}\quad[0,\infty)\times \{0,\ell\},
\end{equation}
    system \eqref{sistema} is exponentially stable if \eqref{EWSC} is satisfied.
\end{itemize}

In \cite{Fatori10}, these results were extended to the Bresse version of \eqref{sistema}, given by
\begin{equation}\left\{\begin{aligned}\label{sistemab}
&\rho_{1}\varphi_{tt} - k(\varphi_{x}+\psi+lw)_{x} - k_{0}l(w_{x}-l\varphi)=0,\\
&\rho_{2}\psi_{tt} - b\psi_{xx}+k(\varphi_{x} + \psi+lw)+\gamma\theta_x=0,\\
&\rho_{1}w_{tt} - k_{0}(w_{x}-l\varphi)_{x} + kl(\varphi_{x}+\psi+lw)=0,\\
&\rho_3\theta_t-\kappa\theta_{xx}+\gamma\psi_{xt}=0,
\end{aligned}\right.\end{equation}
where $w$ is the longitudinal displacement, also a function of $t$ and $x$, and $l$ and $k_0$ are also positive constants. In this case:
 \begin{itemize}
    \item For the mixed boundary conditions
$$
\varphi=\psi_x=w_x=\theta=0\quad\text{on}\quad[0,\infty)\times \{0,\ell\},
$$
    system \eqref{sistemab} is exponentially stable if and only if the more general equal wave speed condition
    \begin{equation}\label{EWSCb}
    \frac{\rho_1}{k}=\frac{\rho_2}{b}\quad \text{and}\quad k=k_0
    \end{equation}
    is satisfied.
     \item For the full homogeneous Dirichlet boundary conditions
    \begin{equation}\label{bcdb}
\varphi=\psi=w=\theta=0\quad\text{on}\quad[0,\infty)\times \{0,\ell\},
\end{equation}
    system \eqref{sistemab} is exponentially stable if \eqref{EWSCb} is satisfied.
\end{itemize}

In these works, the questions of whether condition \eqref{EWSC} is also necessary for the exponential stability of \eqref{sistema} under the boundary conditions \eqref{bcd}, and whether condition \eqref{EWSCb} is also necessary for the exponential stability of \eqref{sistemab} under the boundary conditions \eqref{bcdb}, were left open.

The result on the lack of exponential stability of \eqref{sistema}-\eqref{bcm} has also been extended to many other variants and generalizations of \eqref{sistema}. For example, in \cite{Santos12}, it was extended to the Cattaneo version of \eqref{sistema}; in \cite{Dilberto14}, it was extended to the case where the thermal coupling acts only on the shear force; in \cite{Cui23}, it was extended to a version of \eqref{sistema} with Kelvin–Voigt damping. What all these extensions (and many others) have in common is that they are based on a trigonometric ansatz for the solution of the resolvent equation associated with the system. This ansatz works for suitable mixed boundary conditions but is incompatible with full homogeneous Dirichlet boundary conditions. The difficulty of dealing with this type of boundary conditions has been recognized in the literature. For example:
\begin{itemize}
    \item In the cited work \cite{Fatori10}, the authors stated that, although the lack of exponential stability should be true in the case of full Dirichlet conditions, they were unaware of a formal proof.
    \item In \cite{Liu09, Abdallah18}, which considered different versions of \eqref{sistemab}, the authors mentioned that the approach that uses the trigonometric ansatz to prove the lack of exponential stability had not been successful in the literature under full Dirichlet boundary conditions. In particular, the authors of \cite{Abdallah18} used a different approach to deal with their system.
\end{itemize}

    It is also worth mentioning that, in the work \cite{Cardozo19}, which provides a comprehensive analysis of system \eqref{sistema} with different boundary conditions, the authors used the usual trigonometric ansatz to prove optimality of the polynomial decay rate and left the question of optimality open in the case of full Dirichlet conditions. Since the lack of exponential stability was obtained as a consequence of this optimality, it was not established in their work for \eqref{sistema} with boundary conditions \eqref{bcd}.

Therefore, as far as we know, the following questions have not yet been answered:
\begin{enumerate}[(a)]
    \item\label{a} Do different wave speeds imply the lack of exponential stability for \eqref{sistema} with boundary conditions \eqref{bcd}?
    \item\label{b} Do different wave speeds imply the lack of exponential stability for \eqref{sistemab} with boundary conditions \eqref{bcdb}?
\end{enumerate}
In this work, we show that the implication in \eqref{a} can be proved by introducing a boundary correction term in the usual trigonometric ansatz that works for \eqref{bcm} (see Theorem \ref{mainthm}). We also show that an adaptation of the argument can be used to prove the implication in \eqref{b} (see Theorem \ref{mainthmb}). This work suggests a possible way to extend the standard trigonometric ansatz approach from mixed to full homogeneous Dirichlet boundary conditions in other variants of Timoshenko and Bresse systems that have been studied by many authors, complementing a part of the analysis that is generally left open.

\section{Methodology}

A standard approach for proving lack of exponential stability for Timoshenko and Bresse systems is based on the following well-known result.

\begin{theorem}[Gearhart-Pr\"{u}ss Theorem, see \cite{Liu99}]\label{2506262} Let $\mathcal{H}$ be a complex Hilbert space and $\mathcal{A}:D(\mathcal{A})\subset \mathcal{H}\to\mathcal{H}$ the infinitesimal generator of a $C_0$-semigroup of contractions on $\mathcal{H}$. The semigroup generated by $\mathcal{A}$ is exponentially stable if and only if the following two conditions are satisfied:
\begin{enumerate}[(i)]
    \item\label{2506263} $\mathbf{i}\mathbb R\subset \rho(\mathcal{A})$.
    \item\label{2506261} $\displaystyle\underset{{|\beta|\to\infty}}{\limsup}\|(\mathbf{i}\beta I-\mathcal{A})^{-1}\|<\infty$.
\end{enumerate}
\end{theorem}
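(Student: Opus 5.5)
The plan is the standard route through the resolvent, comparing the growth bound of the semigroup with the resolvent along the imaginary axis.

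\emph{Necessity.} Assume $\|e^{t\mathcal{A}}\|\le Me^{-\omega t}$ with $\omega>0$. For every $\lambda$ with $\operatorname{Re}\lambda>-\omega$, the Laplace transform
\[
R(\lambda)x=\int_0^\infty e^{-\lambda t}e^{t\mathcal{A}}x\,dt
\]
converges absolutely. The usual argument for Hille--Yosida then shows that $R(\lambda)=(\lambda I-\mathcal{A})^{-1}$. Hence $\mathbf{i}\mathbb R\subset\rho(\mathcal{A})$, and
\[
\|(\mathbf{i}\beta I-\mathcal{A})^{-1}\|\le M/\omega \quad\text{uniformly in }\beta,
\]
which gives both (i) and (ii).

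\emph{Sufficiency, first step.} Conditions (i) and (ii) together with continuity of the resolvent on compact sets give $\sup_{\beta\in\mathbb R}\|(\mathbf{i}\beta-\mathcal{A})^{-1}\|=:K<\infty$. The Neumann series then shows that the resolvent exists and stays uniformly bounded on the strip $|\operatorname{Re}\lambda|<1/(2K)$. Combined with the contraction bound $\|(\lambda-\mathcal{A})^{-1}\|\le 1/\operatorname{Re}\lambda$, this gives a uniform bound on the whole half-plane $\operatorname{Re}\lambda>-\delta$ with $\delta=1/(2K)$.

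\emph{Sufficiency, key step.} The remaining task is to pass from resolvent bounds to semigroup decay, and this is exactly where the Hilbert space structure is needed; I expect it to be the main obstacle. I would use Plancherel. Fix $x\in\mathcal{H}$ and $0<\varepsilon<\delta$. The function $t\mapsto e^{\varepsilon t}e^{t\mathcal{A}}x$ is not a priori in $L^2$, so I would first work with $\operatorname{Re}\lambda=a>0$. There $\widehat{e^{-at}e^{t\mathcal{A}}x}(\beta)=(a+\mathbf{i}\beta-\mathcal{A})^{-1}x$, and Plancherel gives
\[
\int_{\mathbb R}\|(a+\mathbf{i}\beta-\mathcal{A})^{-1}x\|^2\,d\beta=2\pi\int_0^\infty e^{-2at}\|e^{t\mathcal{A}}x\|^2dt\le \frac{\pi}{a}\|x\|^2 .
\]
Next I would shift the line to $\operatorname{Re}\lambda=-\varepsilon$ using the resolvent identity
\[
R(-\varepsilon+\mathbf{i}\beta)=R(a+\mathbf{i}\beta)+(a+\varepsilon)R(-\varepsilon+\mathbf{i}\beta)R(a+\mathbf{i}\beta).
\]
With the uniform bound on the half-plane this yields $\int_{\mathbb R}\|R(-\varepsilon+\mathbf{i}\beta)x\|^2d\beta\le C\|x\|^2$. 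A Paley--Wiener/inverse Plancherel argument, applied first to $x\in D(\mathcal{A})$ and extended by density, then gives
\[
\int_0^\infty e^{2\varepsilon t}\|e^{t\mathcal{A}}x\|^2dt\le C'\|x\|^2 .
\]

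\emph{Conclusion.} Finally I would invoke Datko's lemma (also Pazy's theorem): if $\int_0^\infty\|e^{t\mathcal{A}}x\|^2dt<\infty$ for all $x$, then the semigroup is exponentially stable. Even without the weight, the estimate obtained gives $\int_0^\infty\|e^{t\mathcal{A}}x\|^2\le C\|x\|^2$. From this, $t\|e^{t\mathcal{A}}x\|^2\le\int_0^t\|e^{s\mathcal{A}}\|^2\|e^{(t-s)\mathcal{A}}x\|^2ds\le C\|x\|^2$, using contractivity. Therefore $\|e^{t\mathcal{A}}\|<1$ for large $t$, and the semigroup property gives exponential decay.
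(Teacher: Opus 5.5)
The paper gives no proof of this statement. It is quoted from \cite{Liu99}, and the paper itself only uses the implication that exponential stability forces (ii): it concludes non-stability from the failure of (ii). That implication is exactly your Laplace-transform argument, which is correct and holds for any $C_0$-semigroup on a Banach space. Your sufficiency part is the standard Plancherel-based proof followed by a Datko-type argument, and it is sound in outline. The Neumann-series step, the resolvent identity, and the final estimate $t\|e^{t\mathcal{A}}x\|^2\le\int_0^t\|e^{r\mathcal{A}}x\|^2\,dr\le C\|x\|^2$ are all fine.

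The one step that is under-justified as written is the Paley--Wiener passage from $\int_{\mathbb R}\|R(-\varepsilon+\mathbf{i}\beta)x\|^2\,d\beta\le C\|x\|^2$ to the weighted time-domain bound. An $L^2$ bound on a single vertical line does not by itself place $\lambda\mapsto R(\lambda-\varepsilon)x$ in the Hardy space of the right half-plane. So you need the bound uniformly on all lines $\operatorname{Re}\lambda=\sigma$, $\sigma>-\varepsilon$, which your resolvent-identity estimate against a fixed line $\operatorname{Re}\lambda=a_0$ does provide. You then need uniqueness of the Laplace transform to identify the inverse transform with $e^{\varepsilon t}e^{t\mathcal{A}}x$.

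More to the point, this detour is unnecessary, since at the end you only use the unweighted bound. For $0<a<1$ the resolvent identity gives $\|R(a+\mathbf{i}\beta)x\|\le(1+M)\|R(1+\mathbf{i}\beta)x\|$ with $M=\sup_{\operatorname{Re}\lambda>0}\|R(\lambda)\|$. Your Plancherel identity then yields $\int_0^\infty e^{-2at}\|e^{t\mathcal{A}}x\|^2\,dt\le\tfrac12(1+M)^2\|x\|^2$. Monotone convergence as $a\downarrow 0$ gives $\int_0^\infty\|e^{t\mathcal{A}}x\|^2\,dt\le\tfrac12(1+M)^2\|x\|^2$ directly. This route needs no Paley--Wiener theorem, no density argument, and no strip to the left of the imaginary axis. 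It uses only the bound on the open right half-plane, which your Neumann-series step already supplies.
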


Usually, given that condition \eqref{2506263} has already been established, the lack of exponential stability is attained by proving that condition \eqref{2506261} is not satisfied. For this, it is enough to show that there exist sequences $(\beta_n)\subset\mathbb{R}$ and $(U_n)\subset D(\mathcal{A})$ such that
\begin{equation}\label{1209261}
\beta_n\to\infty,\qquad \text{$((\mathbf{i}\beta_nI-\mathcal{A})U_n)$ is bounded in $\mathcal{H}$},\qquad 
\|U_n\|_{\mathcal{H}}
\to\infty
\end{equation}
because, in this case, if we write $F_n=(\mathbf{i}\beta_nI-\mathcal{A})U_n$, we conclude that there exists a constant $C>0$ such that
$$\|U_n\|_{\mathcal{H}}=\|(\mathbf{i}\beta_n I-\mathcal{A})^{-1}F_n\|_{\mathcal{H}}
\leq C \|(\mathbf{i}\beta_n I-\mathcal{A})^{-1}\|_{\mathcal{L}}$$
and, as a result, $$\begin{aligned}\underset{{|\beta|\to\infty}}{\limsup}\|(\mathbf{i}\beta I-\mathcal{A})^{-1}\|_{\mathcal{L}}&\geq \underset{{n\to\infty}}{\limsup}\|(\mathbf{i}\beta_n I-\mathcal{A})^{-1}\|_{\mathcal{L}}\geq \underset{{n\to\infty}}{\limsup}\, C^{-1}\|U_n\|_{\mathcal{H}}=\infty.
\end{aligned}$$

In the case of mixed boundary conditions, we usually first choose a bounded sequence $(F_n)\subset \mathcal{H}$ and then compute the coefficients of a suitable ansatz $U_n\in D(\mathcal{A})$, which solves $(\mathbf{i}\beta_n I-\mathcal{A})U_n=F_n$,  $(F_n)$ being chosen so that the computed coefficients give $\|U_n\|_{\mathcal{H}}\to\infty$. In this case, $F_n$ determines the coefficients of the ansatz. In the approach presented below for the full homogeneous Dirichlet boundary conditions, we first define $(U_n)\subset D(\mathcal{A})$ by introducing a boundary correction term in the ansatz and then choose the coefficients so that the sequence $F_n:=(\mathbf{i}\beta_n I-\mathcal{A})U_n$ remains bounded in $\mathcal{H}$ while $\|U_n\|_{\mathcal{H}}\to\infty$. In this case, the coefficients of the ansatz determine $F_n$. 

This approach was developed by the author through interaction with a large language model. More precisely, the author used ChatGPT-Astra (accessed September 2026) to assist in developing the proof strategy and the construction of the needed sequences. The author formulated the research questions, directed the interactions, critically evaluated the proposed constructions, and independently verified all mathematical arguments and calculations. The author adapted and simplified the proposed constructions, wrote the manuscript, made all final decisions regarding the mathematical content and presentation and takes full responsibility for the results and conclusions.

\section{Timoshenko system}

\subsection{Background}

Let $L^2=L^2(0,\ell)$, $H^2=H^2(0,\ell)$ and $H_0^1=H_0^1(0,\ell)$ be the usual Lebesgue and Sobolev spaces. Write $U=(\varphi,\Phi,\psi,\Psi,\theta)$, $\tilde{U}=(\tilde{\varphi},\tilde{\Phi},\tilde{\psi},\tilde{\Psi},\tilde{\theta})$ and define
$$\mathcal{H}=[H_0^1\times L^2]^2\times L^2
$$
with norm
\begin{equation}\label{norm}
    \begin{aligned}
\|U\|_{\mathcal {H}}^2=&\;\rho_1\|\Phi\|_{2}^2+\rho_2\|\Psi\|_{2}^2+k\|\varphi_x+\psi\|_{2}^2+b\|\psi_x\|_{2}^2+{\rho_3}\|\theta\|_{2}^2
\end{aligned}
\end{equation}
induced by the inner product
$$\begin{aligned}
(\tilde{U},U)_{\mathcal {H}}=&\;\rho_1(\tilde{\Phi},\Phi)_{2}+\rho_2(\tilde{\Psi},\Psi)_{2}+k(\tilde{\varphi}_x+\tilde{\psi},\varphi_x+\psi)_{2}+b(\tilde{\psi}_x,\psi_x)_{2}+{\rho_3}(\tilde{\theta},\theta)_{2}.
\end{aligned}$$

Now, define $\mathcal{A}:D(\mathcal{A})\to \mathcal{H}$
by
$$\mathcal{A}U=\begin{bmatrix}
\Phi\\
\tfrac{1}{\rho_1}k(\varphi_x+\psi)_x\\
\Psi\\
\tfrac{1}{\rho_2}[b\psi_{xx}-k(\varphi_x+\psi)-\gamma\theta_x]\\
\frac{1}{{\rho_3}}[\kappa\theta_{xx}-\gamma\Psi_x]
\end{bmatrix}
$$
with domain
$$ D(\mathcal{A})=[(H^2\cap H_0^1)\times H_0^1]^2\times (H^2\cap H^1_0).
$$

In terms of this operator, system \eqref{sistema} with boundary conditions \eqref{bcd} and initial conditions
$$
\varphi=\varphi_0,\quad \varphi_t=\varphi_1,\quad \psi=\psi_0,\quad \psi_t=\psi_1,\quad \theta=\theta_0 
\qquad\text{on}\qquad\{0\}\times (0,\ell)
$$
can be written as
\begin{equation}\label{PAC}\left\{\begin{aligned}
&U_t(t)=\mathcal{A} U(t),\quad t> 0\\
&U(0)=U_0,
\end{aligned}\right.\end{equation}
where $U(t)=(\varphi(t),\allowbreak\varphi_t(t),\allowbreak\psi(t),\allowbreak\psi_t(t),\allowbreak \theta(t))$ and $U_0=(\varphi_0,\allowbreak \varphi_1,\allowbreak \psi_0,\allowbreak \psi_1,\allowbreak \theta_0)$.

\begin{theorem}[see \cite{Rivera02,Cardozo19}] Let $\mathcal{A}$ be the operator defined above. Then:
\begin{enumerate}[(a)]
    \item $\Re(\mathcal{A}U,U)_{\mathcal{H}}=-\kappa\|\theta_x\|_2^2$ for all $U\in D(\mathcal{A})$.
    \item $\mathbf{i}\mathbb R\subset \rho(\mathcal{A})$.
    \item $\mathcal{A}$ is the infinitesimal generator of a $C_0$ semigroup of contractions on $\mathcal{H}$ and thus \eqref{PAC} has a unique classical solution for each $U_0\in D(\mathcal{A})$, and a unique mild solution for each $U_0\in \mathcal{H}$.
    \item\label{d} If condition \eqref{EWSC} is satisfied, then the semigroup generated by $\mathcal{A}$ is exponentially stable.
\end{enumerate}
\end{theorem}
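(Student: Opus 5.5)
The plan treats the four items in order. Each relies on standard semigroup machinery, and only (b) and (d) carry real content.

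For (a), I take $U\in D(\mathcal{A})$ and compute $(\mathcal{A}U,U)_{\mathcal{H}}$ term by term, integrating by parts. All of $\varphi,\Phi,\psi,\Psi,\theta$ vanish at $x=0,\ell$, so no boundary terms survive. The elastic contributions combine as
\[
k(\Phi_x+\Psi,\varphi_x+\psi)_2+k((\varphi_x+\psi)_x,\Phi)_2 ,
\]
and after integrating by parts in the second term this equals $2\mathbf{i}\,\Im$ of a single product, so it has zero real part. The same cancellation happens between $b(\Psi_x,\psi_x)_2$ and $(b\psi_{xx},\Psi)_2$, and between $-k(\varphi_x+\psi,\Psi)_2$ and the corresponding piece of $k(\Phi_x+\Psi,\varphi_x+\psi)_2$. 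For the thermal coupling, the terms $-\gamma(\theta_x,\Psi)_2$ and $-\gamma(\Psi_x,\theta)_2$ satisfy
\[
-\gamma(\theta_x,\Psi)_2-\gamma(\Psi_x,\theta)_2=\gamma(\theta,\Psi_x)_2-\gamma(\Psi_x,\theta)_2 ,
\]
which is purely imaginary. What remains is $\kappa(\theta_{xx},\theta)_2=-\kappa\|\theta_x\|_2^2$.

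For (c), I apply the Lumer--Phillips theorem. Item (a) gives dissipativity. For maximality I show $0\in\rho(\mathcal{A})$, i.e. that $\mathcal{A}U=F$ is solvable for every $F\in\mathcal{H}$:
\begin{itemize}
\item $\Phi=f_1$ and $\Psi=f_3$ come directly from the equation.
\item $\theta\in H^2\cap H_0^1$ solves the elliptic problem $\kappa\theta_{xx}=\rho_3 f_5+\gamma f_{3,x}$.
\item $(\varphi,\psi)\in (H_0^1)^2$ solve the coercive bilinear problem on $H_0^1\times H_0^1$ with form $k(\varphi_x+\psi,\tilde\varphi_x+\tilde\psi)+b(\psi_x,\tilde\psi_x)$. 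Coercivity follows from Poincaré's inequality. Lax--Milgram gives a weak solution, and elliptic regularity gives $H^2$.
\end{itemize}
Since $\rho(\mathcal{A})$ is open, $\lambda I-\mathcal{A}$ is onto for small $\lambda>0$. Lumer--Phillips then gives a $C_0$ contraction semigroup, and the classical and mild solution statements are standard.

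For (b), I argue by contradiction. The domain $D(\mathcal{A})$ embeds compactly in $\mathcal{H}$ and $0\in\rho(\mathcal{A})$, so the resolvent is compact and $\sigma(\mathcal{A})$ consists only of eigenvalues. Suppose $\mathcal{A}U=\mathbf{i}\beta U$ with $\beta\neq0$. By (a), $\theta_x=0$, and with the boundary condition this gives $\theta=0$. The heat equation then yields $\Psi_x=0$, so $\Psi=0$ since $\Psi\in H_0^1$, and hence $\psi=0$. The second equation now reduces to $k\varphi_x=0$, so $\varphi=0$ and $\Phi=\mathbf{i}\beta\varphi=0$. Thus $U=0$, a contradiction.

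Part (d) is the substantive step. Here I invoke the Gearhart--Prüss theorem, Theorem \ref{2506262}, and argue by contradiction, assuming there are $\beta_n\to\infty$ and $\|U_n\|=1$ with $(\mathbf{i}\beta_n-\mathcal{A})U_n\to0$. The argument then proceeds in three steps:
\begin{enumerate}
\item Item (a) gives $\theta_{n,x}\to0$.
\item Multipliers derived from the heat equation give $\|\Psi_n\|\to0$ and $\|\psi_{n,x}\|\to0$ on the dissipative part.
\item Multiplying the second equation by $\overline{(\varphi_x+\psi)}$ and the first by $\overline{\psi_x}$ produces a coupling term of the form
\[
\Big(\frac{\rho_1 b}{k}-\rho_2\Big)\beta_n^2(\cdot,\cdot).
\]
This term vanishes exactly under \eqref{EWSC}, and that is what yields $\|\varphi_x+\psi\|\to0$ and $\|\Phi\|\to0$.
\end{enumerate}
The main obstacle is the set of point-wise boundary terms $\psi_x(0),\psi_x(\ell)$ and $\varphi_x(0),\varphi_x(\ell)$ that appear with Dirichlet conditions. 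I would control them with the standard multiplier $q(x)=x-\ell/2$ together with interpolation estimates, as in \cite{Rivera02}. This is the standard argument, and I would cite it rather than reproduce it.
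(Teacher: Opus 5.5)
Your proposal is correct and takes essentially the same route as the paper. The paper gives no proof of this background result and simply cites \cite{Rivera02,Cardozo19}, where (a)--(c) are obtained by exactly your integration-by-parts, Lax--Milgram/Lumer--Phillips and compact-resolvent arguments, and where (d) is proved by the Gearhart--Pr\"{u}ss multiplier argument you outline and defer to: the heat-equation estimates for $\psi$, the $\overline{(\varphi_x+\psi)}$/$\overline{\psi_x}$ pairing whose coupling term $\bigl(\rho_2-\tfrac{\rho_1 b}{k}\bigr)\mathbf{i}\beta_n(\psi_{n,x},\Phi_n)_2$ vanishes under \eqref{EWSC}, and the multiplier $x-\ell/2$ for the boundary traces.
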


\subsection{Main result}

In this section, we prove the converse of item \eqref{d}, obtaining a complete characterization of the exponential stability in terms of the coefficients of the system: system \eqref{sistema} with Dirichlet boundary conditions \eqref{bcd} is exponentially stable if and only if ${\rho_1}/{k}={\rho_2}/{b}$. To the best of our knowledge, this result was previously known only for mixed boundary conditions, while the case of full Dirichlet boundary conditions has usually remained open in the context of Timoshenko systems.

\begin{theorem}\label{mainthm} Let $\mathcal{A}$ be the operator defined in the previous section. If 
\begin{equation}\label{nEWSC}
\frac{\rho_1}{k}\neq \frac{\rho_2}{b},
\end{equation}
then the semigroup generated by $\mathcal{A}$ is not exponentially stable.
    
\end{theorem}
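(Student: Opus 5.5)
The plan is to produce sequences $(\beta_n)\subset\mathbb R$ and $(U_n)\subset D(\mathcal A)$ satisfying \eqref{1209261}. By Theorem \ref{2506262} and the discussion after it, together with $\mathbf{i}\mathbb R\subset\rho(\mathcal A)$, this shows that the semigroup is not exponentially stable.

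I take $\lambda_n=2n\pi/\ell$, so that $\sin(\lambda_n x)$ vanishes at $x=0,\ell$ and $\cos(\lambda_n x)$ equals $1$ at both endpoints. The mixed-boundary ansatz $\varphi=A\sin$, $\psi=B\cos$, $\theta=C\sin$ fails only because $\psi$ does not vanish at the boundary. I therefore introduce the constant boundary correction
\begin{equation*}
\varphi_n=A_n\sin(\lambda_n x),\qquad \psi_n=B_n\bigl(\cos(\lambda_n x)-1\bigr),\qquad \theta_n=C_n\sin(\lambda_n x),
\end{equation*}
with $\Phi_n=\mathbf{i}\beta_n\varphi_n$ and $\Psi_n=\mathbf{i}\beta_n\psi_n$. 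All components then satisfy \eqref{bcd}, so $U_n\in D(\mathcal A)$, and the first and third components of $F_n=(\mathbf{i}\beta_n-\mathcal A)U_n$ vanish. The correction is constant, so it is invisible to every $x$-derivative. It appears only in the zero-order term $k(\varphi_x+\psi)$, as a constant error of size $kB_n$ in the second and fourth equations, and as a term $-\rho_3^{-1}\gamma B_n\lambda_n\sin(\lambda_n x)$-type contribution in the heat equation through $\Psi_x$.

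Next I choose the coefficients by matching Fourier modes. In $\rho_1 F_2$ the $\sin$-coefficient is
\begin{equation*}
(-\rho_1\beta_n^2+k\lambda_n^2)A_n+k\lambda_nB_n .
\end{equation*}
I make it zero by taking $\beta_n^2=k\lambda_n^2/\rho_1+k\lambda_nB_n/(\rho_1A_n)$. This is the resonance with the $\varphi$-wave speed.

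For the heat equation, I choose $C_n$ so that the $\sin$-mode of $F_5$ vanishes, which gives
\begin{equation*}
C_n=-\frac{\gamma\lambda_n\beta_n B_n}{\mathbf{i}\rho_3\beta_n+\kappa\lambda_n^2}.
\end{equation*}
Hence $|C_n|\lesssim |B_n|$, since $\beta_n\sim\lambda_n$.

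In $\rho_2F_4$ the $\cos$-coefficient is
\begin{equation*}
(-\rho_2\beta_n^2+b\lambda_n^2+k)B_n+k\lambda_nA_n+\gamma\lambda_nC_n .
\end{equation*}
Because of \eqref{nEWSC}, $-\rho_2\beta_n^2+b\lambda_n^2\approx\lambda_n^2\bigl(b-\rho_2k/\rho_1\bigr)$ is of exact order $\lambda_n^2$. I set this coefficient to zero by solving for $A_n$, which gives $A_n\approx-\lambda_n(b-\rho_2k/\rho_1)B_n/k$. This, together with the formula for $\beta_n$, is a small fixed-point/quadratic problem: $A_n/B_n$ enters $\beta_n$ only at relative order $\lambda_n^{-2}$, so it is solvable for large $n$. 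I then normalize $B_n=1/\lambda_n$.

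With these choices one gets $A_n\sim 1$ and $\|\Phi_n\|_2\sim\beta_n|A_n|\sim\lambda_n\to\infty$, so $\|U_n\|_{\mathcal H}\to\infty$. The remaining residuals are all small: the constant $kB_n$ in $F_2$ and $F_4$, of size $O(\lambda_n^{-1})$, and the boundary-generated term in $F_5$, of size $\gamma\beta_nB_n$, which is $O(1)$. So $F_n$ is bounded in $\mathcal H$.

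The main obstacle is the bookkeeping in the last step: the correction term must create only bounded residuals while $\|U_n\|\to\infty$. This is exactly why I put $B_n$ small and $A_n\sim 1$, so that the large quantity is $\beta_nA_n$ and the correction is weighted by the small $B_n$. One also needs $\Re\beta_n^2>0$; this holds for large $n$ with the choice above.

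If the $F_5$ residual turned out to be awkward, I would alternatively absorb it by letting $\theta_n$ also include the $\mathbf{i}\beta_n$-driven constant-mode part. This is unnecessary, though, since a bounded $F_5$ suffices.
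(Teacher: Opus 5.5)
Your construction fails in the fourth component, and the problem is structural rather than a matter of bookkeeping.

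You say the constant correction in $\psi_n$ enters only through $k(\varphi_x+\psi)$. It also enters through the inertial term. Since $\Psi_n=\mathbf{i}\beta_nB_n(\cos(\lambda_nx)-1)$, the term $\rho_2\mathbf{i}\beta_n\Psi_n$ contains the constant $+\rho_2\beta_n^2B_n$. After your choice of $A_n$ kills the $\cos$-mode, what remains is exactly
\[
\rho_2 f_{4n}=(\rho_2\beta_n^2-k)B_n\qquad\text{(a constant function on $(0,\ell)$)}.
\]
So $\|f_{4n}\|_2\approx\beta_n^2|B_n|\sqrt{\ell}$, which with $B_n=1/\lambda_n$ is of order $\lambda_n\to\infty$.

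Rescaling does not rescue this. Take your resonance $\beta_n\sim\lambda_n$ and your relation $A_n\approx-\lambda_n(b-\rho_2k/\rho_1)B_n/k$. Then $\|U_n\|_{\mathcal H}$ (dominated by $\beta_n|A_n|$ and $\lambda_n|A_n|$) and $\|F_n\|_{\mathcal H}$ (bounded below by this constant) are both of exact order $\lambda_n^2|B_n|$. Their ratio stays bounded for every choice of $B_n$, so a spatially constant corrector can never produce \emph{$F_n$ bounded, $\|U_n\|_{\mathcal H}\to\infty$}. Separately, your remarks about a constant error $kB_n$ in $F_2$ and a boundary term in $F_5$ are wrong, since $\psi$ enters those equations only through $\partial_x$. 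That error is harmless.

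The missing idea is to localize the corrector in boundary layers of width $\sim 1/n$. The paper takes $\psi_n=B_n[\cos(c_nx)-r_n(x)]$ with
\[
r_n(x)=v(c_nx)+(-1)^nv(c_n(\ell-x)),\qquad v\in C^2,\ v(0)=1,\ v\equiv 0 \text{ on } [1,\infty).
\]
This gives $\|r_n\|_2^2=O(n^{-1})$, $\|r_n'\|_2^2=O(n)$ and $\|r_n''\|_2^2=O(n^3)$.

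The inertial residual $\rho_2\beta_n^2B_nr_n$ then costs only $O(n^{3/2}|B_n|)$ instead of your $O(n^2|B_n|)$. A non-constant corrector creates new residuals $bB_nr_n''$ in $\rho_2f_{4n}$ and $-\mathbf{i}\gamma\beta_nB_nr_n'$ in $\rho_3f_{5n}$, but these are of the same order $O(n^{3/2}|B_n|)$. Hence $B_n=n^{-3/2}$ keeps $F_n$ bounded, while $A_n\sim c_nB_n\sim n^{-1/2}$ gives $\|\Phi_n\|_2^2\sim n\to\infty$. The factor $n^{1/2}$ gained by localization is exactly what your constant corrector lacks.

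A smaller point: $\beta_n$ must be real. Your choice $\beta_n^2=k\lambda_n^2/\rho_1+k\lambda_nB_n/(\rho_1A_n)$ is generally not real, because $A_n/B_n$ inherits an imaginary part from the complex $C_n$. The condition $\Re\beta_n^2>0$ is therefore not the right one. Instead take $\beta_n=\lambda_n\sqrt{k/\rho_1}$ and keep the residual $k\lambda_nB_n\sin(\lambda_nx)$, which is small under the correct scaling of $B_n$.
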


\begin{proof} It is enough to show that there exist sequences $(\beta_n)\subset\mathbb{R}$ and $(U_n)\subset D(\mathcal{A})$ satisfying \eqref{1209261}. In order to construct such sequences, choose a real function ${v}\in C^2[0,\infty)$
such that ${v}(0)=1$ and ${v}(s)=0$ for all $s\geq 1$. For each positive integer $n$, define
\begin{equation}\label{1309266}
c_n=\frac{n\pi}{\ell}
\end{equation}
and
$$
r_n(x)=\underbrace{{v}(c_nx)}_{:=a_n(x)}+\underbrace{(-1)^n{v}(c_n(\ell-x))}_{:=b_n(x)},
\quad x\in[0,\ell].
$$

Since $c_n\ell=n\pi>1$, we conclude that ${v}(c_n\ell)=0$. Therefore,
\begin{equation}\label{1209262}
r_n(0)=1\quad\text{and}\quad r_n(\ell)=(-1)^n.
\end{equation}

Using the changes of variable $s=c_nx$ and $s=c_n(\ell-x)$, we obtain
\begin{equation}\label{1709262}
\begin{aligned}
\|r_n\|_2^2\leq C(\|a_n\|_2^2+\|b_n\|_2^2)&= C\left( \frac{1}{c_n}\int_0^{n\pi} |{v}(s)|^2\,ds+\frac{1}{c_n}\int_0^{n\pi} |{v}(s)|^2\,ds\right)\\
&=\frac{2C\ell}{n\pi}\int_0^1|{v}(s)|^2\,ds=O(n^{-1}).
\end{aligned}
\end{equation}

As
$$ r_n' (x)=c_n{v}'(c_nx)+c_n(-1)^{n+1}{v}'(c_n(\ell-x))$$
and
$$ r_n'' (x)=c_n^2{v}''(c_nx)+c_n^2(-1)^{n+2}{v}''(c_n(\ell-x)),$$
an analogous reasoning shows that
\begin{equation}\label{1709263}
\| r_n' \|_2^2\leq C\left(c_n\int_0^{n\pi} |{v}'(s)|^2\,ds+c_n\int_0^{n\pi} |{v}'(s)|^2\,ds\right)=O(n)
\end{equation}
and
\begin{equation}\label{1709264}
\| r_n'' \|_2^2\leq C\left(c_n^3\int_0^{n\pi} |{v}''(s)|^2\,ds+c_n^3\int_0^{n\pi} |{v}''(s)|^2\,ds\right) = O(n^3).
\end{equation}

Now, let us take $\beta_n\in\mathbb R$ and $A_n,B_n,D_n\in\mathbb C$ to be chosen later and define $U_n=
(\varphi_n,\allowbreak\Phi_n,\allowbreak\psi_n,\allowbreak\Psi_n,\allowbreak\theta_n)$, where
\begin{equation}\label{1309263}
\begin{array}{lll}
\varphi_{n}(x)=A_n\sin (c_nx), &\psi_{n}(x)
=B_n\bigl[\cos(c_nx)-r_n(x)\bigr],\\
\Phi_{n}(x)
=\mathbf{i}\beta_nA_n\sin (c_nx), &
\Psi_{n}(x)
=\mathbf{i}\beta_nB_n
\bigl[\cos(c_nx)-r_n(x)\bigr],\\
\theta_{n}(x)
=D_n\sin (c_nx).&
\end{array}
\end{equation}
Due to \eqref{1209262}, it is clear that $\varphi_n,\Phi_n,\psi_n,\Psi_n,\theta_n\in H_0^1$ and thus $U_n\in D(\mathcal{A})$. Let us write
\begin{equation}\label{1309262}
(\mathbf{i}\beta_n I-\mathcal{A})U_n=(f_{1n},f_{2n}, f_{3n},f_{4n},f_{5n}).    
\end{equation}
Then,
\begin{equation}\label{1709265}
\begin{aligned}
\|(\mathbf{i}\beta_n I-\mathcal{A})U_n\|^2_{\mathcal{H}}\overset{\eqref{norm}}{=}&\,
\rho_1\|f_{2n}\|_{2}^2+\rho_2\|f_{4n}\|_{2}^2+k\|f_{1n,x}+f_{3n}\|_{2}^2+b\|f_{3n,x}\|_{2}^2\\
&  +{\rho_3}\|f_{5n}\|_{2}^2.
\end{aligned}
\end{equation}
In terms of its components, \eqref{1309262} can be written as
\begin{equation}\label{1709261}
\begin{aligned}
\mathbf{i}\beta_n\varphi_{n}-\Phi_{n}
&=f_{1n},\\
\rho_1\mathbf{i}\beta_n\Phi_{n}
-k(\varphi_{n,x}+\psi_{n})_x
&=\rho_1f_{2n},\\
\mathbf{i}\beta_n\psi_{n}-\Psi_{n}
&= f_{3n},\\
\rho_2\mathbf{i}\beta_n\Psi_{n}
-b\psi_{n,xx}
+k(\varphi_{n,x}+\psi_{n})
+\gamma\theta_{n,x}
&=\rho_2f_{4n},\\
\rho_3\mathbf{i}\beta_n\theta_{n}
-\kappa\theta_{n,xx}
+\gamma\Psi_{n,x}
&=\rho_3f_{5n}.
\end{aligned}
\end{equation}

Substituting \eqref{1309263} in \eqref{1709261}, we obtain
\begin{align}
f_{1n}&=0,\label{f1}\\
\rho_1f_{2n}
&=
\bigl[{\underbrace{(kc_n^2-\rho_1\beta_n^2)A_n}_{:=I_1}}+kc_nB_n\bigr]
\sin (c_nx)
+kB_n r_n' ,\label{f2}\\
 f_{3n}&=0,\label{f3}\\
\begin{split}
\rho_2f_{4n}
&=
\bigl[{\underbrace{kc_nA_n+(bc_n^2-\rho_2\beta_n^2)B_n}_{:=I_2}}+kB_n
+\gamma c_nD_n\bigr]\cos(c_nx) +(\rho_2\beta_n^2-k)B_nr_n+bB_n r_n'' ,
\end{split}\label{f4}\\
\rho_3 f_{5n}
&=
\bigl[\rho_3\mathbf{i}\beta_nD_n+{\underbrace{\kappa c_n^2D_n
-\mathbf{i}\gamma\beta_nc_nB_n}_{:=I_3}}\bigr]
\sin (c_nx)
-\mathbf{i}\gamma\beta_nB_n r_n' .\label{f5}
\end{align}

Now, we want to eliminate the leading-order terms in the coefficients of the trigonometric functions. For this, we choose $\beta_n$ such that $I_1=0$, $A_n$ such that $I_2=0$ and $D_n$ such that $I_3=0$ (the last two in terms of $B_n$, which will be chosen later). That is, we take
\begin{equation}\label{1309267}
\beta_n=c_n\sqrt{\frac{k}{\rho_1}},\qquad   A_n=\left(\frac{\rho_2}{\rho_1}-\frac{b}{k}\right)c_nB_n,\qquad D_n =\underbrace{\frac{\mathbf{i}\gamma}{\kappa}\sqrt{\frac{k}{\rho_1}}}_{:=\eta}B_n.
\end{equation}

Then, \eqref{f2}, \eqref{f4} and \eqref{f5} become
$$
\begin{aligned}
\rho_1f_{2n}&=B_n\bigl[kc_n
\sin (c_nx)
+k r_n' \bigr],\\
\rho_2f_{4n}&=B_n\bigl[(k+\gamma \eta c_n)\cos(c_nx)+(\rho_2\beta_n^2-k)r_n+b r_n'' \bigr],\\
\rho_3 f_{5n} & = B_n\mathbf{i}\bigl[\rho_3\eta\beta_n\sin (c_nx) -\gamma\beta_n r_n'\bigr].
\end{aligned}
$$

Finally, we choose $B_n$ so that $(f_{2n})$, $(f_{4n})$ and $(f_{5n})$ remain bounded in $L^2$. Since the last equalities combined with \eqref{1709262}-\eqref{1709264} imply that
\begin{equation}\label{1309264}
\|f_{2n}\|_2^2=|B_n|^2O(n^2),\quad \|f_{4n}\|_2^2=|B_n|^2O(n^{3})\quad \text{and}\quad \|f_{5n}\|_2^2=|B_n|^2O(n^{3}),
\end{equation}
it is enough to choose $B_n$ such that $|B_n|^2=O(n^{-3})$. For this, we take
\begin{equation}\label{1309265}
B_n=\frac{1}{n^{3/2}}.
\end{equation}
From \eqref{1309266} and \eqref{1309267}, it is clear that $\beta_n\to\infty$ and thus the first condition in \eqref{1209261} is satisfied. From \eqref{f1}, \eqref{f3}, \eqref{1309264}, \eqref{1309265} and \eqref{1709265}, we conclude that the second condition in \eqref{1209261} is satisfied. In order to see that the third condition in \eqref{1209261} is also satisfied, note that
$$
\begin{aligned}
&\|U_n\|_{\mathcal{H}}^2\overset{\eqref{norm}}{\geq}\rho_1\|\Phi_n\|_2^2\overset{\eqref{1309263}}{=}\rho_1\beta_n^2|A_n|^2
\int_0^\ell\sin ^2(c_nx)\,dx= {\underbrace{\left(\frac{\rho_2}{\rho_1}-\frac{b}{k}\right)}_{\text{$\neq 0$ by \eqref{nEWSC}}}}^2\frac{k\pi^4}{2\ell^3}n
\to\infty.
\end{aligned}
$$
\end{proof}

\section{Bresse system}

\subsection{Background}

Let $L^2$, $H^2$ and $H_0^1$ be as before. Write $U=(\varphi,\Phi,\psi,\Psi,w,W,\theta)$, $\tilde{U}=(\tilde{\varphi},\tilde{\Phi},\tilde{\psi},\tilde{\Psi},\tilde{w},\tilde{W},\tilde{\theta})$ and define
$$\mathcal{H}=[H_0^1\times L^2]^3\times L^2
$$
with norm
\begin{equation}\label{normb}
    \begin{aligned}
\|U\|_{\mathcal {H}}^2=&\;\rho_1\|\Phi\|_{2}^2+\rho_2\|\Psi\|_{2}^2+\rho_1\|W\|_{2}^2+k\|\varphi_x+\psi+lw\|_{2}^2\\
&+b\|\psi_x\|_{2}^2+k_0\|w_x-l\varphi\|_2^2+{\rho_3}\|\theta\|_{2}^2
\end{aligned}
\end{equation}
induced by the inner product
$$\begin{aligned}
(\tilde{U},U)_{\mathcal {H}}=&\;\rho_1(\tilde{\Phi},\Phi)_{2}+\rho_2(\tilde{\Psi},\Psi)_{2}+\rho_1(\tilde{W},W)_{2}+k(\tilde{\varphi}_x+\tilde{\psi}+l\tilde{w},\varphi_x+\psi+lw)_{2}\\
&+b(\tilde{\psi}_x,\psi_x)_{2}+k_0(\tilde{w}_x-l\tilde{\varphi},w_x-l\varphi)_2+{\rho_3}(\tilde{\theta},\theta)_{2}.
\end{aligned}$$

Now, define $\mathcal{A}:D(\mathcal{A})\to \mathcal{H}$
by
$$\mathcal{A}U=\begin{bmatrix}
\Phi\\
\frac{1}{\rho_1}[k(\varphi_x+\psi+lw)_x+k_0l(w_x-l\varphi)]\\
\Psi\\
\frac{1}{\rho_2}[b\psi_{xx}-k(\varphi_x+\psi+lw)-\gamma\theta_x]\\
W\\
\frac{1}{\rho_1}[k_0(w_x-l\varphi)_x-kl(\varphi_x+\psi+lw)]\\
\frac{1}{\rho_3}[\kappa\theta_{xx}-\gamma\Psi_x]
\end{bmatrix}$$
with domain
$$ D(\mathcal{A})=[(H^2\cap H_0^1)\times H_0^1]^3\times (H^2\cap H^1_0).
$$

In terms of this operator, system \eqref{sistemab} with boundary conditions \eqref{bcdb} and initial conditions
$$
\varphi=\varphi_0,\quad \varphi_t=\varphi_1,\quad \psi=\psi_0,\quad \psi_t=\psi_1,\quad w=w_0,\quad w_t=w_1,\quad \theta=\theta_0 
\qquad\text{on}\qquad\{0\}\times (0,\ell)
$$
can be written as
\begin{equation}\label{PACb}\left\{\begin{aligned}
&U_t(t)=\mathcal{A} U(t),\quad t> 0\\
&U(0)=U_0,
\end{aligned}\right.\end{equation}
where $U(t)=(\varphi(t),\allowbreak\varphi_t(t),\allowbreak\psi(t),\allowbreak\psi_t(t),\allowbreak w(t),\allowbreak w_t(t),\allowbreak \theta(t))$ and $U_0=(\varphi_0,\allowbreak \varphi_1,\allowbreak \psi_0,\allowbreak \psi_1,\allowbreak w_0,\allowbreak w_1,\allowbreak \theta_0)$.

\begin{theorem}[see \cite{Fatori10}] Let $\mathcal{A}$ be the operator defined above. Then:
\begin{enumerate}[(a)]
    \item $\Re(\mathcal{A}U,U)_{\mathcal{H}}=-\kappa\|\theta_x\|_2^2$ for all $U\in D(\mathcal{A})$.
    \item $\mathbf{i}\mathbb R\subset \rho(\mathcal{A})$.
    \item $\mathcal{A}$ is the infinitesimal generator of a $C_0$ semigroup of contractions on $\mathcal{H}$ and thus \eqref{PACb} has a unique classical solution for each $U_0\in D(\mathcal{A})$, and a unique mild solution for each $U_0\in \mathcal{H}$.
    \item\label{db} If condition \eqref{EWSCb} is satisfied, then the semigroup generated by $\mathcal{A}$ is exponentially stable.
\end{enumerate}
\end{theorem}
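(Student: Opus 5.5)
The statement is the background result of \cite{Fatori10}. I would prove its four items in the order (a), (c), (b), (d), working throughout with $U\in D(\mathcal{A})$ and repeatedly integrating by parts, using that $\varphi,\psi,w,\theta,\Phi,\Psi,W$ vanish at $x=0,\ell$.

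For (a), I would compute $(\mathcal{A}U,U)_{\mathcal{H}}$ term by term. After integration by parts, the elastic contributions pair up as $z-\bar z$, with $z=k(\Phi_x+\Psi+lW,\varphi_x+\psi+lw)_2$ and the analogous $b$- and $k_0$-terms. The thermal coupling gives $-\gamma(\theta_x,\Psi)_2-\gamma(\Psi_x,\theta)_2=\gamma(\theta,\Psi_x)_2-\gamma(\Psi_x,\theta)_2$, which is purely imaginary. What remains is $-\kappa\|\theta_x\|_2^2$.

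For (c), I would use Lumer--Phillips. Dissipativity follows from (a), so it suffices to show $0\in\rho(\mathcal{A})$.
\begin{itemize}
\item Given $F$, solving $\mathcal{A}U=F$ gives $\Phi,\Psi,W$ directly from the first-order components.
\item $\theta$ is then obtained from the Dirichlet problem $\kappa\theta_{xx}=\rho_3f_7+\gamma\Psi_x$.
\item $(\varphi,\psi,w)$ solves an elliptic system. I would handle it by Lax--Milgram on $(H_0^1)^3$, using the bilinear form whose quadratic part is $k\|\varphi_x+\psi+lw\|_2^2+b\|\psi_x\|_2^2+k_0\|w_x-l\varphi\|_2^2$.
\end{itemize}
The one point to check is coercivity, i.e.\ that this quadratic form is a norm equivalent to the $(H^1_0)^3$ norm. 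Its kernel is trivial: if the form vanishes, then $\psi=0$, $\varphi_x=-lw$ and $w_x=l\varphi$. Hence $\varphi''=-l^2\varphi$ with $\varphi(0)=\varphi_x(0)=0$, since $w(0)=0$, so $\varphi=w=0$. Coercivity then follows by a standard compactness (Peetre-type) argument. Elliptic regularity gives $U\in D(\mathcal{A})$, and density of $D(\mathcal{A})$ is clear.

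For (b), the resolvent of $\mathcal{A}$ is compact, because $D(\mathcal{A})$ embeds compactly in $\mathcal{H}$. So $\sigma(\mathcal{A})$ consists of eigenvalues only, and since $0\in\rho(\mathcal{A})$ it suffices to exclude $\mathbf{i}\beta$ eigenvalues with $\beta\neq0$. Suppose $\mathcal{A}U=\mathbf{i}\beta U$ with $U\ne 0$. Then (a) forces $\theta_x=0$, hence $\theta=0$. The heat equation then gives $\Psi_x=0$, so $\psi=0$. The $\psi$-equation then gives $\varphi_x+lw=0$. The $(\varphi,w)$ equations now form a linear ODE system in which $\varphi(0)=w(0)=0$ and $\varphi_x(0)=-lw(0)=0$. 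This leaves $w_x(0)$ as the only free datum, and I would use the second-order equations together with $\varphi_x+lw=0$ (which gives $w_x=-\varphi_{xx}/l$) to show that $w_x(0)=0$ as well. Cauchy uniqueness then yields $U=0$.

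For (d), I would apply Theorem \ref{2506262} and argue by contradiction. Suppose there are $|\beta_n|\to\infty$ and $\|U_n\|_{\mathcal{H}}=1$ with $(\mathbf{i}\beta_n-\mathcal{A})U_n=F_n\to0$. By (a), $\|\theta_{n,x}\|_2\to0$. The heat equation, tested against suitable primitives, then gives $\|\Psi_n\|_2\to0$ and $\|\psi_{n,x}\|_2\to0$. Next I would use the multiplier $\overline{(\varphi_{n,x}+\psi_n+lw_n)}$ in the $\psi$-equation, and the multipliers $q(x)\bar\varphi_{n,x}$, $q(x)\bar w_{n,x}$ in the remaining equations, to recover the shear and axial energies.

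The main obstacle is in this last step. The cross terms $\rho_2\beta^2(\psi,\varphi_x+\psi+lw)$ and $b(\psi_x,(\varphi_x+\psi+lw)_x)$ combine into a term of size $(\rho_1 b/k-\rho_2)\beta^2(\dots)$, which can only be controlled when \eqref{EWSC} holds. Similarly, the $\varphi$--$w$ coupling closes only when $k=k_0$. In addition, the Dirichlet boundary terms such as $|\psi_x(0)|^2$ and $|\varphi_x(\ell)|^2$ must be bounded. I would treat these with the weight $q(x)=x-\ell/2$ and interpolation estimates for pointwise values, and I expect this boundary bookkeeping to be the delicate part of the proof.
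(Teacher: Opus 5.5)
The paper gives no proof of this statement; it is quoted from \cite{Fatori10}. Your arguments for (a)--(c) are correct and standard. In (b) the remaining datum is immediate: with $\psi=0$ and $\varphi_x+lw=0$, the $\varphi$-equation becomes $k_0l(w_x-l\varphi)=-\rho_1\beta^2\varphi$, so $w_x(0)=0$.

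The genuine gap is (d), which is the real content of the statement and which you leave as an outline. Most concretely, the multipliers $q\bar\varphi_x$ and $q\bar w_x$ with $q=x-\ell/2$ cannot recover the axial energy. In the two identities the $\varphi$--$w$ cross terms are $-(k+k_0)l\,\Re(w_x,q\varphi_x)_2$ and $+(k+k_0)l\,\Re(\varphi_x,qw_x)_2$. These cancel for \emph{all} $k,k_0$, and what remains is an observability-type identity ``interior energy $=$ boundary flux $+\,o(1)$''. Once $\|\Phi_n\|_2,\|\varphi_{n,x}\|_2\to0$, the $q\bar w_x$ identity reduces to $\rho_1\|W_n\|_2^2+k_0\|w_{n,x}\|_2^2=\frac{k_0\ell}{2}\bigl(|w_{n,x}(0)|^2+|w_{n,x}(\ell)|^2\bigr)+o(1)$. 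This is circular, and $k=k_0$ plays no role in it. What is missing is a multiplier that couples the two waves. For example, write $S=\varphi_x+\psi+lw$ and $N=w_x-l\varphi$, and test the $\varphi$-equation with $\overline{N}$. Using the $w$-equation for $N_x$ and $\mathbf{i}\beta S=\Phi_x+\Psi+lW+(\text{terms in }F)$, this produces the cross term $\rho_1(1-k/k_0)(\Phi_x,W)_2$. That term is not controlled by the energy and vanishes exactly when $k=k_0$. When $k=k_0$ one then gets $\rho_1l\|W\|_2^2+k_0l\|N\|_2^2\le C(\|\Phi\|_2^2+\|S\|_2^2+\|\Psi\|_2)+k\sum_{x\in\{0,\ell\}}|S(x)N(x)|+o(1)$.

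Second, the boundary terms are not disposed of by mere boundedness. With $\|U_n\|_{\mathcal H}=1$, the shear estimate contains $b\,\psi_{n,x}\overline{S_n}$ evaluated at $x=0,\ell$ (note $S_n(0)=\varphi_{n,x}(0)$), and the axial estimate contains $k\,S_n\overline{N_n}$ there. Each is a product of two $O(1)$ quantities and must tend to $0$. This forces an order:
\begin{itemize}
\item after $\|\Psi_n\|_2,\|\psi_{n,x}\|_2\to0$, the $q\bar\psi_x$ identity gives $\psi_{n,x}(0),\psi_{n,x}(\ell)\to0$;
\item after $\|S_n\|_2,\|\Phi_n\|_2\to0$, the $q\bar S$ identity in the $\varphi$-equation gives $S_n(0),S_n(\ell)\to0$, while $N_n(0),N_n(\ell)$ stay bounded by the $q\bar N$ identity.
\end{itemize}
Your first step is likewise not automatic with Dirichlet data on $\theta$. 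Testing the heat equation with $\int_0^x\bar\Psi$ leaves $\kappa\theta_x(\ell)\overline{\int_0^\ell\Psi}$, and $\theta_x(\ell)$ is not controlled by $\|\theta_x\|_2\to0$: interpolation gives only $|\theta_x(\ell)|^2\lesssim\|\theta_x\|_2\|\theta_x\|_{H^1}$, and $\|\theta_{xx}\|_2=O(\beta)$. Moreover, $\beta(\theta,\int_0^x\Psi)_2$ must be rewritten via the $\psi$-equation, which brings in $\psi_x(0)$. You therefore need the bound on $\psi_x(0),\psi_x(\ell)$ from the $q\bar\psi_x$ identity, and $\int_0^\ell\Psi=O(\beta^{-1})$, obtained by integrating the $\psi$-equation over $(0,\ell)$. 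Without these steps, (d) is not established.
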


\subsection{Lack of exponential stability}

In this section, we prove the converse of item \eqref{db}, obtaining a complete characterization of the exponential stability in terms of the coefficients of the system: system \eqref{sistemab} with Dirichlet boundary conditions \eqref{bcdb} is exponentially stable if and only if ${\rho_1}/k={\rho_2}/{b}$ and $k=k_0$. To the best of our knowledge, this result was previously known only for mixed boundary conditions, while the case of full Dirichlet boundary conditions has usually remained open in the context of Bresse systems.

\begin{theorem}\label{mainthmb}
 Let $\mathcal{A}$ be the operator defined in the previous section. If
\begin{equation}\label{unequalb}
\frac{\rho_1}{k}\neq \frac{\rho_2}{b}
\qquad\text{or}\qquad
k\neq k_0,
\end{equation}
then the semigroup generated by $\mathcal{A}$ is not exponentially stable.
\end{theorem}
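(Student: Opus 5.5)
The plan follows the proof of Theorem \ref{mainthm}. We use Theorem \ref{2506262} and construct sequences satisfying \eqref{1209261}, reusing the corrector $r_n$ and the bounds \eqref{1709262}--\eqref{1709264}. The guiding principle is the following.
\begin{itemize}
\item The components whose trigonometric factor does not vanish at $x=0,\ell$ (the cosine ones) get the corrector $-r_n$. Their amplitudes must be $O(n^{-3/2})$, so that residuals like $\beta_n^2B_nr_n$ and $B_nr_n''$ stay bounded in $L^2$.
\item One sine component carries the growing energy, with amplitude of order $c_n$ times these small amplitudes.
\item $\beta_n$ and the remaining amplitudes are fixed by requiring that the full coefficient of each trigonometric function in $F_n$ vanishes exactly, not only its leading order.
\end{itemize}
Exact cancellation is needed because the Bresse couplings create extra terms such as $k_0l^2A_n$. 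With $A_n\sim c_nB_n$ these are of size $n^{1/2}$ in $L^2$, so leaving them would break boundedness. We split into two cases according to \eqref{unequalb}.

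\textbf{Case $\rho_1/k\neq\rho_2/b$.} Take the ansatz
\[
\varphi_n=A_n\sin(c_nx),\qquad \psi_n=B_n[\cos(c_nx)-r_n],\qquad w_n=C_n[\cos(c_nx)-r_n],\qquad \theta_n=D_n\sin(c_nx),
\]
with velocities $\mathbf{i}\beta_n$ times the displacements, so that $f_{1n}=f_{3n}=f_{5n}=0$. This is consistent, since $\varphi_x+\psi+lw$ and $w_x-l\varphi$ are then pure cosine and pure sine respectively. The heat equation again gives $D_n=\eta_nB_n$ with $\eta_n$ bounded. The three mechanical equations give a $3\times 3$ linear system in $(A_n,B_n,C_n)$. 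We solve it as follows.
\begin{itemize}
\item Fix $B_n=n^{-3/2}$.
\item Choose $\beta_n^2=(kc_n^2+k_0l^2)/\rho_1+O(1)$ so that the $\sin$-coefficient of the $\varphi$-equation vanishes.
\item Determine $A_n$ from the $\psi$-equation. This gives $A_n=(\rho_2/\rho_1-b/k)c_nB_n+O(n^{-1}B_n)$.
\item Determine $C_n$ from the $w$-equation.
\end{itemize}
In the $w$-equation, $C_n$ multiplies $(k_0-k)c_n^2+O(1)$. If $k\neq k_0$, then $C_n=O(B_n)$. If $k=k_0$, this coefficient degenerates to a constant, and I would instead treat $(\beta_n^2,A_n,C_n)$ jointly as a perturbative root of the $3\times3$ determinant, to show $C_n=O(c_nB_n)$ still suffices. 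This sub-case needs checking. The corrector residuals then behave as in \eqref{1309264}, and $\|U_n\|_{\mathcal H}^2\ge\rho_1\beta_n^2|A_n|^2\|\sin(c_n\cdot)\|_2^2\sim n$.

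\textbf{Case $\rho_1/k=\rho_2/b$, $k\neq k_0$.} Here the energy is carried by $w$ on the branch $\beta_n^2\approx k_0c_n^2/\rho_1$. We switch parities:
\[
w_n=C_n\sin(c_nx),\qquad \psi_n=B_n\sin(c_nx),\qquad \varphi_n=A_n[\cos(c_nx)-r_n],\qquad \theta_n=D_n[\cos(c_nx)-r_n].
\]
Now $\varphi_x+\psi+lw$ is a sine and $w_x-l\varphi$ a cosine. The plan is:
\begin{itemize}
\item The $w$-equation fixes $\beta_n$ exactly.
\item The $\varphi$-equation gives $A_n$ in terms of $C_n$. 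Its coefficient is $(k-k_0)c_n^2/\rho_1+O(1)$, which is nonzero, and the forcing is of size $c_nC_n$, so $A_n\sim C_n/c_n$.
\item The $\psi$- and heat equations give $B_n$ and $D_n$. Since $\psi$ is driven only by lower-order terms, $B_n=O(C_n/c_n)$, and hence $D_n=O(C_n)$.
\item Choosing $C_n=n^{-1/2}$ makes $A_n$ of order $n^{-3/2}$ and the $\varphi$-corrector residuals bounded. The energy then satisfies $\rho_1\beta_n^2|C_n|^2\sim n\to\infty$.
\end{itemize}

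\textbf{Main obstacle.} The hard step is the bookkeeping of which amplitudes multiply the correctors, and in particular the corrector on $\theta$. The term $\kappa D_nr_n''$ has $L^2$ norm of order $|D_n|n^{3/2}$, so we need $D_n=O(n^{-3/2})$. That in turn requires $B_n=O(n^{-5/2})$, i.e.\ the $\psi$-amplitude must be one order below $A_n$. I would first try to arrange this by solving the $\psi$-equation so that its $O(c_nA_n)$ forcing cancels; the leading $\psi$ forcing is $k c_n$ times the sine coefficient $(-c_nA_n+B_n+lC_n)$ of $\varphi_x+\psi+lw$. If that cancellation fails, the fallback is to move the corrector to $\psi$ instead of $\theta$ by a different parity choice. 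The same kind of check is needed in the degenerate sub-case $k=k_0$ of the first case.
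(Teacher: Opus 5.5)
\textbf{The gap: the subcase $k=k_0$, $\rho_1/k\neq\rho_2/b$.} Your case split leaves uncovered exactly the hard case, which the paper treats separately as its Case 2, and the repair you suggest there cannot work. Set $k=k_0$ and $\lambda_n:=kc_n^2+kl^2-\rho_1\beta_n^2$. In your first ansatz, the $\sin$-coefficient of the $\varphi$-equation is $\lambda_nA_n+kc_nB_n+2klc_nC_n$, and the $\cos$-coefficient of the $w$-equation is $2klc_nA_n+klB_n+\lambda_nC_n$. This $(\varphi,w)$ block is resonant. Solving it forces $\lambda_n\approx\mp 2klc_n$, not $O(1)$; equivalently $\rho_1\beta_n^2\approx k(c_n\pm l)^2$. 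It also forces $C_n\approx\pm A_n\sim c_nB_n\sim n^{-1/2}$. But $w_n$ is a cosine with a corrector, so near $x=0$ the component $f_{6n}$ contains $C_n(\rho_1\beta_n^2r_n+kr_n'')\approx kc_n^2C_n(v+v'')(c_nx)$. Its $L^2$ norm is of order $|C_n|n^{3/2}\sim n$; note $v+v''\not\equiv 0$, since $v(0)=1$ and $v$ vanishes on $[1,\infty)$. So $C_n=O(c_nB_n)$ does not suffice: by your own first principle you need $C_n=O(n^{-3/2})$.

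That requirement is fatal for this ansatz. If $|B_n|,|C_n|=O(n^{-3/2})$ and both coefficients above are bounded, then $|\lambda_n||A_n|=O(1)$ and $c_n|A_n|\lesssim 1+|\lambda_n||C_n|$. These give $|A_n|=O(n^{-1})$, so on the branch $\beta_n\sim c_n$ the energy $\rho_1\|\Phi_n\|_2^2$ stays bounded whatever $\beta_n$ you pick.

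The missing idea is the paper's phase modulation $E(x)=e^{\mathbf{i}\omega x}$. Take $\varphi_n=A_nE\sin(c_nx)$, $w_n=C_nE\sin(c_nx)$, $\psi_n=B_nE[\cos(c_nx)-r_n]$ and $\theta_n=D_nE\sin(c_nx)$, with $\omega=l$ and $C_n=-\mathbf{i}A_n$ when $k=k_0$. Then $\varphi_{n,x}+lw_n=c_nA_nE\cos(c_nx)$ and $w_{n,x}-l\varphi_n=-\mathbf{i}c_nA_nE\cos(c_nx)$. Both energy-carrying components vanish at $0$ and $\ell$ without correctors, and the $\varphi$--$w$ block behaves like the Timoshenko case. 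Only $\psi_n$ carries $r_n$, with $B_n=n^{-3/2}$ and $A_n=\eta_1c_nB_n$.

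\textbf{Exact cancellation.} The reason you give for needing it is a miscount. With $A_n\sim c_nB_n\sim n^{-1/2}$, the term $k_0l^2A_n\sin(c_nx)$ is $O(n^{-1/2})$ in $L^2$, so cancelling only the leading orders, as the paper does, suffices. Exact cancellation with real $\beta_n$ is in fact impossible in your first case. If every trigonometric coefficient vanished, the uncorrected trigonometric profile would be an eigenvector for $\mathbf{i}\beta_n$ of the dissipative operator with the mixed conditions $\varphi=\psi_x=w_x=\theta=0$. Then $\kappa\|\theta_x\|_2^2=0$ would force $D_n=0$, and the heat equation would then force $B_n=0$, contradicting $B_n=n^{-3/2}$.

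\textbf{Your second case.} The ``main obstacle'' comes from a bookkeeping slip. The heat equation gives $D_n\approx-\mathbf{i}(\gamma/\kappa)\sqrt{k_0/\rho_1}\,B_n$, so $D_n\sim B_n$, not $c_nB_n$. Moreover $B_n=O(C_n/c_n^2)$, so the corrector on $\theta_n$ is harmless. Simpler still, you may take $\psi_n=\theta_n=0$ in that ansatz, as the paper's Case 1 does. The leftover $k(\varphi_{n,x}+lw_n)$ in $f_{4n}$ is then $O(n^{-1/2})$ in $L^2$. This covers every $k\neq k_0$ at once, regardless of how $\rho_1/k$ compares with $\rho_2/b$.
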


\begin{proof} As in the proof of Theorem \ref{mainthm}, it is enough to show that there exist sequences $(\beta_n)\subset\mathbb{R}$ and $(U_n)\subset D(\mathcal{A})$ satisfying \eqref{1209261}. In order to construct such sequences, let $c_n$ and $r_n$ be exactly as in the proof of Theorem~\ref{mainthm}. A natural attempt to extend the previous approach to the case of the Bresse system would be to take \eqref{1309263} together with $w_{n}(x)
=C_n\bigl[\cos(c_nx)-r_n(x)\bigr]$ and $W_{n}=\mathbf{i}\beta_n C_n[\cos(c_nx)-r_n(x)]$. This approach works well in the case ${\rho_1}/{k}\neq {\rho_2}/{b}$ and $k\neq k_0$, but is not good enough to cover the other cases included in \eqref{unequalb}. Therefore, it is convenient to consider different sequences in separate cases.

\vspace{\baselineskip}

\textbf{Case 1:} Suppose that
\begin{equation}\label{case1}
k\neq k_0.    
\end{equation}
Let us take $\beta_n\in\mathbb R$ and $A_n,B_n\in\mathbb C$ to be chosen later and define $U_n=
(\varphi_n,\allowbreak\Phi_n,\allowbreak\psi_n,\allowbreak\Psi_n,\allowbreak w_n,\allowbreak W_n,\allowbreak\theta_n)$, where
\begin{equation}\label{1309263b}
\begin{array}{llll}
\varphi_{n}(x)=A_n[\cos(c_n x)-r_n], &\psi_{n}(x)
=0, &w_n=B_n\sin(c_nx),\\
\Phi_{n}(x)
=\mathbf{i}\beta_nA_n[\cos(c_n x)-r_n], &
\Psi_{n}(x)
=0,&W_n=\mathbf{i}\beta_nB_n\sin(c_nx),\\
\theta_{n}(x)
=0.&
\end{array}
\end{equation}
Due to \eqref{1209262}, it is clear that $\varphi_n,\Phi_n,\psi_n,\Psi_n,w_n,W_n,\theta_n\in H_0^1$ and thus $U_n\in D(\mathcal{A})$. Let us write
\begin{equation}\label{1309262b}
(\mathbf{i}\beta_n I-\mathcal{A})U_n=(f_{1n},f_{2n}, f_{3n},f_{4n},f_{5n},f_{6n},f_{7n}).    
\end{equation}
Then,
\begin{equation}\label{1709265b}
\begin{aligned}
\|(\mathbf{i}\beta_n I-\mathcal{A})U_n\|^2_{\mathcal{H}}\overset{\eqref{normb}}{=}&\;
\rho_1\|f_{2n}\|_{2}^2+\rho_2\|f_{4n}\|_{2}^2+\rho_1\|f_{6n}\|_{2}^2+k\|f_{1n,x}+f_{3n}+lf_{5n}\|_{2}^2\\
&+b\|f_{3n,x}\|_{2}^2+k_0\|f_{5n,x}-lf_{1n}\|_2^2 +{\rho_3}\|f_{7n}\|_{2}^2.
\end{aligned}
\end{equation}
In terms of its components, \eqref{1309262b} can be written as
\begin{equation}\label{1709261b}
\begin{aligned}
\mathbf{i}\beta_n\varphi_{n}-\Phi_{n}
&=f_{1n},\\
\rho_1\mathbf{i}\beta_n\Phi_{n}
-k(\varphi_{n,x}+\psi_n+lw_n)_x
-k_0l(w_{n,x}-l\varphi_n)
&=\rho_1 f_{2n},\\
\mathbf{i}\beta_n\psi_{n}-\Psi_{n}
&=f_{3n},\\
\rho_2\mathbf{i}\beta_n\Psi_{n}
-b\psi_{n,xx}
+k(\varphi_{n,x}+\psi_n+lw_n)
+\gamma\theta_{n,x}
&=\rho_2 f_{4n},\\
\mathbf{i}\beta_n w_{n}-W_{n}
&=f_{5n},\\
\rho_1\mathbf{i}\beta_n W_{n}
-k_0(w_{n,x}-l\varphi_n)_x
+kl(\varphi_{n,x}+\psi_n+lw_n)
&=\rho_1 f_{6n},\\
\rho_3\mathbf{i}\beta_n\theta_{n}
-\kappa\theta_{n,xx}
+\gamma\Psi_{n,x}
&=\rho_3 f_{7n}.
\end{aligned}
\end{equation}

Substituting \eqref{1309263b} in \eqref{1709261b}, we obtain
\begin{align}
f_{1n}&=0,\label{f1b}\\
\rho_1 f_{2n}
&=
\bigl[
k_0l^2A_n+{\underbrace{(k c_n^2-\beta_n^2\rho_1)A_n
-(k+k_0)l c_nB_n}_{:=I_1}}
\bigr]\cos(c_nx)+(\rho_1\beta_n^2-k_0l^2)A_n r_n+kA_n r_n'',\label{f2b}\\
f_{3n}&=0,\label{f3b}\\
\rho_2 f_{4n}
&=
k(-c_nA_n+lB_n)\sin(c_nx)
-kA_n r_n',\label{f4b}\\
f_{5n}&=0,\label{f5b}\\
\rho_1 f_{6n}
&=
\bigl[
-(k+k_0)l c_nA_n+{\underbrace{(k_0c_n^2-\beta_n^2\rho_1)B_n}_{:=I_2}}
+kl^2B_n
\bigr]\sin(c_nx)
-(k+k_0)lA_n r_n',\label{f6b}\\
\rho_3 f_{7n}&=0.\label{f7b}
\end{align}

Now, in order to eliminate the leading-order terms in the coefficients of the trigonometric functions, we choose $\beta_n$ such that $I_2=0$ and $B_n$ such that $I_1=0$ (the last one in terms of $A_n$, which is left to be chosen later). That is, we take
\begin{equation}\label{1309267b}
\beta_n=c_n\sqrt{\frac{k_0}{\rho_1}},\qquad   B_n={\underbrace{(k-k_0)\frac{1}{(k+k_0)l}}_{:=\eta}}c_nA_n.
\end{equation}

Then, \eqref{f2b}, \eqref{f4b} and \eqref{f6b} become
\begin{align*}
\rho_1 f_{2n}
&=A_n\bigl[k_0l^2\cos(c_nx)
+(\rho_1\beta_n^2-k_0l^2)r_n
+kr_n''\bigr],\\
\rho_2 f_{4n}
&=A_n\bigl[k(l\eta-1)c_n\sin(c_nx)
-k r_n'\bigr],\\
\rho_1 f_{6n}
&=A_n\bigl[(kl\eta-k-k_0)lc_n\sin(c_nx)
-(k+k_0)l r_n'\bigr].
\end{align*}

Finally, we choose $A_n$ so that $(f_{2n})$, $(f_{4n})$ and $(f_{6n})$ remain bounded in $L^2$. Since the last equalities combined with \eqref{1709262}-\eqref{1709264} imply that
\begin{equation}\label{1309264b}
\|f_{2n}\|_2^2=|A_n|^2O(n^3),\quad \|f_{4n}\|_2^2=|A_n|^2O(n^{2})\quad \text{and}\quad \|f_{6n}\|_2^2=|A_n|^2O(n^{2}),
\end{equation}
it is enough to choose $A_n$ such that $|A_n|^2=O(n^{-3})$. For this, we take
\begin{equation}\label{1309265b}
A_n=\frac{1}{n^{3/2}}.
\end{equation}
From \eqref{1309266} and \eqref{1309267b}, it is clear that $\beta_n\to\infty$ and thus the first condition in \eqref{1209261} is satisfied. From \eqref{f1b}, \eqref{f3b}, \eqref{f5b}, \eqref{f7b}, \eqref{1309264b}, \eqref{1309265b} and \eqref{1709265b}, we conclude that the second condition in \eqref{1209261} is satisfied. In order to see that the third condition in \eqref{1209261} is also satisfied, note that
$$
\begin{aligned}
&\|U_n\|_{\mathcal{H}}^2\overset{\eqref{normb}}{\geq}\rho_1\|W_n\|_2^2\overset{\eqref{1309263b}}{=}\rho_1\beta_n^2|B_n|^2
\int_0^\ell\sin ^2(c_nx)\,dx={\underbrace{(k-k_0)}_{\text{$\neq 0$ by \eqref{case1}}}}^2\frac{k_0\pi^4}{2\ell^3(k+k_0)^2l^2}n
\to\infty.
\end{aligned}
$$

\vspace{\baselineskip}

\textbf{Case 2:} Suppose that
\begin{equation}\label{case2}
\frac{\rho_1}{k}\neq \frac{\rho_2}{b}
\qquad\text{and}\qquad
k=k_0.
\end{equation}
Let us take $\beta_n,\omega\in\mathbb R$ and $A_n,B_n,C_n,D_n\in\mathbb C$ to be chosen later, define $E(x)=e^{\mathbf{i}\omega x}$ and $U_n=
(\varphi_n,\allowbreak\Phi_n,\allowbreak\psi_n,\allowbreak\Psi_n,\allowbreak w_n,\allowbreak W_n,\allowbreak\theta_n)$, where
\begin{equation}\label{1309263b2}
\begin{array}{llll}
\varphi_{n}(x)=A_nE\sin(c_nx), &\psi_{n}(x)
=B_nE[\cos(c_nx)-r_n(x)], &w_n=C_n E \sin(c_nx),\\
\Phi_{n}(x)
=\mathbf{i}\beta_nA_nE\sin(c_nx), &
\Psi_{n}(x)
=\mathbf{i}\beta_n B_nE[\cos(c_nx)-r_n(x)],&W_n=\mathbf{i}\beta_nC_n E \sin(c_nx),\\
\theta_{n}(x)
=D_nE\sin(c_nx).&
\end{array}
\end{equation}
Due to \eqref{1209262}, it is clear that $\varphi_n,\Phi_n,\psi_n,\Psi_n,w_n,W_n,\theta_n\in H_0^1$ and thus $U_n\in D(\mathcal{A})$. Let us use the notation \eqref{1309262b} again.
Then, \eqref{1709265b} and \eqref{1709261b} are also valid. Substituting \eqref{1309263b2} in \eqref{1709261b}, we obtain
\begin{equation}\label{f1b2}
    f_{1n}=f_{3n}=f_{5n}=0,
\end{equation}

\begin{equation}\label{f2b2}
    \begin{aligned}
        \rho_1f_{2n}
=&\,
\bigl[
({\underbrace{kc_n^2-\rho_1\beta_n^2}_{:=I_1}}+k\omega^2
+k_0l^2)A_n
+kc_nB_n-l(k+k_0)\omega\mathbf{i} C_n
\bigr]E\sin(c_nx)\\
&+
\bigl[
-2k\omega c_nA_n
- k\omega B_n+l(k+k_0)\mathbf{i} c_n C_n
\bigr]\mathbf{i}E\cos(c_nx)
+\bigl[r_n'
+\mathbf i \omega r_n\bigr]kEB_n,
    \end{aligned}
\end{equation}

\begin{equation}\label{f4b2}
    \begin{aligned}
\rho_2f_{4n}
=&\,
\bigl[
\omega k A_n
+2b\omega c_nB_n
-lk\mathbf{i} C_n +\gamma\omega D_n
\bigr]\mathbf{i}E\sin(c_nx)\\
&+
\bigl[{\underbrace{k c_n A_n+
(bc_n^2-\rho_2\beta_n^2)B_n}_{:=I_2}}+(b\omega^2
+k
)B_n
+\gamma c_n D_n
\bigr]E\cos(c_nx)\\
&+
\bigl[
(\rho_2\beta_n^2
-b\omega^2
-k)r_n
+
2\mathbf i b\omega r_n'
+
br_n''
\bigr]E B_n,
    \end{aligned}
\end{equation}

\begin{equation}\label{f6b2}
    \begin{aligned}
    \rho_1f_{6n}
=&\,
\bigl[
l(k+k_0)\mathbf{i}\omega A_n+({\underbrace{
k_0c_n^2-\rho_1\beta_n^2}_{:=I_3}}+k_0\omega^2
+kl^2)C_n
\bigr] E \sin(c_nx)\\
&+
\bigl[
l(k+k_0)c_nA_n
+klB_n-2k_0\mathbf{i}\omega c_n C_n
\bigr]E \cos(c_nx)-kl E B_n r_n,
    \end{aligned}
\end{equation}

\begin{equation}\label{f7b2}
\begin{aligned}
\rho_3f_{7n}
=&\,
\bigl[
{\underbrace{-\mathbf{i}\gamma\beta_nc_nB_n+\kappa c_n^2 D_n}_{:=I_4}}+(\kappa \omega^2+\mathbf i\rho_3\beta_n)D_n
\bigr]E\sin(c_nx)\\
&-
\bigl[
\gamma\omega\beta_n B_n+2\mathbf i\kappa\omega c_nD_n
\bigr]E\cos(c_nx)+
\bigl[\gamma\omega\beta_n r_n
-\mathbf i\gamma\beta_n r_n'\bigr] E B_n.
\end{aligned}    
\end{equation}

Now, we want to eliminate the leading-order terms in the coefficients of the trigonometric functions. In order to obtain $I_1=0$, we choose
\begin{equation}\label{1110261}
\beta_n=c_n\sqrt{\frac{k}{\rho_1}}.
\end{equation}
Due to the equality in \eqref{case2}, this choice of $\beta_n$ also implies that $I_3=0$. In order to obtain $I_2=0$ and $I_4=0$, we choose
$$A_n={\underbrace{\left(\frac{k_0}{\rho_1}
-\frac{b}{\rho_2}\right)\frac{\rho_2}{k}}_{:=\eta_1}}c_nB_n,\qquad D_n={\underbrace{\mathbf{i}\frac{\gamma}{\kappa} \sqrt{\frac{k_0}{\rho_1}}}_{:=\eta_2}}B_n.$$

Then, \eqref{f2b2}-\eqref{f7b2} become
\begin{equation}\label{f2b22}
    \begin{aligned}
        \rho_1f_{2n}
=&\,
\bigl[
((k\omega^2
+k_0l^2)\eta_1
+k)c_nB_n-l(k+k_0)\omega\mathbf{i} C_n
\bigr]E\sin(c_nx)\\
&+
\bigl[
- k\omega B_n{\underbrace{-2k\omega \eta_1 c_n^2 B_n
+l(k+k_0)\mathbf{i} c_n C_n}_{:=J_1}}
\bigr]\mathbf{i}E\cos(c_nx)
+\bigl[r_n'
+\mathbf i \omega r_n\bigr]kEB_n,
    \end{aligned}
\end{equation}

\begin{equation}\label{f4b22}
    \begin{aligned}
\rho_2f_{4n}
=&\,
\bigl[
(\omega k \eta_1
+2b\omega)c_nB_n
-lk\mathbf{i} C_n +\gamma\omega \eta_2 B_n
\bigr]\mathbf{i}E\sin(c_nx)\\
&+
\bigl[(b\omega^2
+k
)B_n
+\gamma \eta_2 c_n B_n
\bigr]E\cos(c_nx)\\
&+
\bigl[
(\rho_2\beta_n^2
-b\omega^2
-k)r_n
+
2\mathbf i b\omega r_n'
+
br_n''
\bigr]E B_n,
    \end{aligned}
\end{equation}

\begin{equation}\label{f6b22}
    \begin{aligned}
    \rho_1f_{6n}
=&\,
\bigl[
l(k+k_0)\mathbf{i}\omega \eta_1 c_n B_n+(k_0\omega^2
+kl^2)C_n
\bigr] E \sin(c_nx)\\
&+
\bigl[klB_n+
{\underbrace{l(k+k_0)\eta_1 c_n^2 B_n
-2k_0\mathbf{i}\omega c_n C_n}_{:=J_2}}
\bigr]E \cos(c_nx)-kl E B_n r_n,
    \end{aligned}
\end{equation}

\begin{equation}\label{f7b22}
\begin{aligned}
\rho_3f_{7n}
=&\,
(\kappa \omega^2+\mathbf i\rho_3\beta_n)\eta_2 E B_n\sin(c_nx)\\
&-
\bigl[
\gamma\omega\beta_n+2\mathbf i\kappa\omega \eta_2 c_n
\bigr]E B_n\cos(c_nx)+
\bigl[\gamma\omega\beta_n r_n
-\mathbf i\gamma\beta_n r_n'\bigr] E B_n.
\end{aligned}    
\end{equation}

Note that the previous choices generated new higher-order terms in $J_1$ and $J_2$. In order to eliminate them, we first choose $C_n$ such that $J_1=0$, that is, we take
$$C_n=\underbrace{\frac{2k\omega \eta_1}{l(k+k_0)\mathbf{i}}}_{:=\eta_3} c_nB_n.$$
Then, $J_2$ becomes $\left[l(k+k_0) -\omega^2 \frac{4kk_0 }{l(k+k_0)} \right]\eta_1c_n^2B_n$. Therefore, in order to eliminate $J_2$, we choose
$$\omega =\frac{l(k+k_0)}{2\sqrt{kk_0}}.$$

Due to these choices, \eqref{f2b22}-\eqref{f6b22} become
\begin{equation}\label{f2b23}
    \begin{aligned}
        \rho_1f_{2n}
=&\,
\bigl[
(k\omega^2
+k_0l^2)\eta_1
+k-l(k+k_0)\omega\mathbf{i} \eta_3 
\bigr] c_n E B_n\sin(c_nx)
- k\omega B_n \mathbf{i}E\cos(c_nx)\\
&+\bigl[r_n'
+\mathbf i \omega r_n\bigr]kEB_n,
    \end{aligned}
\end{equation}

\begin{equation}\label{f4b23}
    \begin{aligned}
\rho_2f_{4n}
=&\,
\bigl[
(\omega k \eta_1
+2b\omega-lk\mathbf{i} \eta_3 )c_n
 +\gamma\omega \eta_2
\bigr] \mathbf{i}E B_n\sin(c_nx)
+
\bigl[(b\omega^2
+k
)
+\gamma \eta_2 c_n 
\bigr] B_nE\cos(c_nx)\\
&+
\bigl[
(\rho_2\beta_n^2
-b\omega^2
-k)r_n
+
2\mathbf i b\omega r_n'
+
br_n''
\bigr]E B_n,
    \end{aligned}
\end{equation}

\begin{equation}\label{f6b23}
    \begin{aligned}
    \rho_1f_{6n}
=&\,
\bigl[
l(k+k_0)\mathbf{i}\omega \eta_1 +(k_0\omega^2
+kl^2)\eta_3
\bigr]c_nE B_n \sin(c_nx)+klE B_n \cos(c_nx)-kl E B_n r_n.
    \end{aligned}
\end{equation}

Finally, we choose $B_n$ so that $(f_{2n})$, $(f_{4n})$, $(f_{6n})$ and $(f_{7n})$ remain bounded in $L^2$. Note that \eqref{f7b22}-\eqref{f6b23} combined with \eqref{1709262}-\eqref{1709264} imply that
\begin{equation}\label{1309264b2}
\|f_{2n}\|_2^2=|B_n|^2O(n^2),\quad \|f_{4n}\|_2^2=|B_n|^2O(n^{3}),\quad \|f_{6n}\|_2^2=|B_n|^2O(n^{2}),\quad \|f_{7n}\|_2^2=|B_n|^2O(n^{3}).
\end{equation}
Therefore, it is enough to choose $B_n$ such that $|B_n|^2=O(n^{-3})$. For this, we take
\begin{equation}\label{1309265b2}
B_n=\frac{1}{n^{3/2}}.
\end{equation}
From \eqref{1309266} and \eqref{1110261}, it is clear that $\beta_n\to\infty$ and thus the first condition in \eqref{1209261} is satisfied. From \eqref{f1b2}, \eqref{1309264b2}, \eqref{1309265b2} and \eqref{1709265b}, we conclude that the second condition in \eqref{1209261} is satisfied. In order to see that the third condition in \eqref{1209261} is also satisfied, note that
$$
\begin{aligned}
&\|U_n\|_{\mathcal{H}}^2\overset{\eqref{normb}}{\geq}\rho_1\|\Phi_n\|_2^2\overset{\eqref{1309263b2}}{= }\rho_1\beta_n^2|A_n|^2
\int_0^\ell |e^{ \mathbf{i} \omega x}|^2\sin ^2(c_nx)\,dx={\underbrace{\left(\frac{k_0}{\rho_1}
-\frac{b}{\rho_2}\right)}_{\text{$\neq 0$ by \eqref{case2}}}}^2\frac{\pi^4\rho_2^2}{2k\ell^3}n
\to\infty.
\end{aligned}
$$
\end{proof}

\bibliographystyle{plain}

\bibliography{cas-refs}

\end{document}